\numberwithin{equation}{section}
\newcommand \reg{\operatorname{reg}}
\newcommand \Tor{\operatorname{Tor}}
\newcommand \Vr{\operatorname{Vert}}
\newcommand \cl{\operatorname{cl}}
\newcommand \St{\text{St}}
\newcommand \K{\mathbb{K}}
\newtheorem{theorem}{Theorem}[section]
\newtheorem{question}[theorem]{Question}
\newtheorem{remark}[theorem]{Remark}
\newtheorem{conj}[theorem]{Conjecture}
\newtheorem{corollary}[theorem]{Corollary}
\newtheorem*{notation*}{Notation}
\begin{document}

\author{Arindam Banerjee}
\address{Ramakrishna Mission Vivekananda Educational and Research Institute, Belur, India}
\email{123.arindam@gmail.com}

\author{Eran Nevo}

\address{Einstein Institute of Mathematics,
The Hebrew University of Jerusalem.}
\email{nevo@math.huji.ac.il}


\title{Regularity of Edge Ideals Via Suspension}

\begin{abstract}
We study the Castelnuovo-Mumford regularity of powers of edge ideals. We prove that if $G$ is a bipartite graph, then $\reg(I(G)^s) \leq 2s+\reg I(G)-2$ for all $s\geq 2$, which is the best possible upper bound for any $s$. Suspension plays a key role in proof of the base case $s=2$.
\end{abstract}
\maketitle

\section{Introduction}
Let $M$ be a finitely generated graded module over a polynomial ring $R = \K[x_1,\ldots,x_n]$, where
$\K$ is a field. The Castelnuovo-Mumford regularity (or simply, regularity) $\reg(M)$ of $M$
is defined as
$$
 \reg(M)=\max \{j-i \mid \Tor_i^R(M, \K)_j \neq 0\}.
$$
 $\text{  }$ $\text{  }$Regularity  is an important invariant in commutative algebra and algebraic geometry that measures the complexity of ideals, modules, and sheaves. A question that has been studied by many is how the regularity behaves with respect to taking powers of homogenous ideals. It is known that in the long-run $\reg(I^k)$ is linear in $k$, that is, there exist integers $a(I), b(I), c(I)$ such that $\reg(I^k) = a(I)k + b(I)$ for all $k\geq c(I)$ (see \cite{CHT,Ko}). For various classes of ideals people have studied these integers and also have looked for various upper and lower bounds for $\reg(I^k)$. 
 For monomial ideals these invariants as well as bounds reflect the underlying combinatorics (see e.g. \cite{ABH,HVT,HHZ,MS,MV,N} for various works under this theme).
 For monomial ideals $I$ generated in same degree $d$, Kodiyalam~\cite{Ko} showed that $a(I)=d$.  

 One important class of monomial ideal is the class of edge ideals $I(G)$ of finite simple graphs, namely the ideals generated by squarefree monomials of degree two. For edge ideals, $c(I(G))\le 2$ for various cases: for example when the underlying graph is either cochordal or gap and cricket free or bipartite with $\reg(I(G))\leq 3$
 (see \cite{AA,AB,ABH,Fro}). 
 As of $b(I(G))$, all known examples have
 \begin{equation}\label{eq:b(I)}
b(I(G))\leq \reg(I(G))-2
 \end{equation}
 and it is conjectured (see e.g. \cite{ABH,JS}) that this inequality holds for any graph. For various classes of graphs (e.g cochordal)
 we have $b(I(G))=\reg(I(G))-2$ so this upper bound is tight if holds.
Our Theorem~\ref{thm:main}(ii) below verifies inequality (\ref{eq:b(I)}) for all $G$ bipartite.
 Clearly this bound this sharp, for example if we take any complete bipartite graph with nonempty edge set then $\reg(I(G)^s)=2s$ for all $s$ by Theorem 2.6 below.


 Our main theorem is the following:
\begin{theorem}\label{thm:main}
(i) Let $G$ be a finite simple graph. Then:
$$ \text{ }\reg(I(G)^2) \leq \reg I(G)+2.$$
(ii) Further, if $G$ is also bipartite, then for all $s\geq 2$ we have: $$\text{ } \reg(I(G)^s) \leq 2s+\reg I(G)-2.$$
\end{theorem}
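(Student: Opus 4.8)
\emph{Sketch of proof.}
The plan is to deduce (ii) from (i) by induction on $s$, with (i) serving as the base case: for $s=2$ the asserted bound $2s+\reg I(G)-2$ is exactly $\reg I(G)+2$, which is (i) applied to a bipartite $G$. Thus the bulk of the work is (i), where suspension enters, together with an inductive step propagating the bound to larger powers.

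For the inductive step, assume $\reg(I(G)^s)\le 2s+\reg I(G)-2$ and bound $\reg(I(G)^{s+1})$. Here I would use the standard reduction for powers of edge ideals: iterating the short exact sequences $0\to R/(J:g)(-\deg g)\to R/J\to R/(J+(g))\to 0$ as $J$ runs through the successive ideals in the chain $I(G)^{s+1}\subseteq\cdots\subseteq I(G)^s$ obtained by adjoining, one at a time, the (degree-$2s$) minimal monomial generators $g$ of $I(G)^s$, one gets
$$
\reg\big(I(G)^{s+1}\big)\ \le\ \max\Big(\big\{\,\reg\big(I(G)^{s+1}:f\big)+2s\ :\ f\ \text{a minimal generator of}\ I(G)^s\,\big\}\ \cup\ \big\{\reg\big(I(G)^s\big)\big\}\Big).
$$
By the inductive hypothesis $\reg(I(G)^s)\le 2s+\reg I(G)-2$ lies strictly below the target $2(s+1)+\reg I(G)-2=2s+\reg I(G)$, so it remains to show
$$
\reg\big(I(G)^{s+1}:f\big)\ \le\ \reg I(G)
$$
for every minimal monomial generator $f$ of $I(G)^s$.

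To handle this I would invoke the even-connection description of such colon ideals: writing $f=e_1\cdots e_s$ as a product of edges of $G$, the ideal $I(G)^{s+1}:f$ is generated in degree $2$ and equals the edge ideal of the graph $G^f$ obtained from $G$ by adjoining an edge $uv$ for every pair $u,v$ even-connected with respect to $f$. Since $G$ is bipartite, an even-connecting walk has odd length and therefore joins the two colour classes; in particular $u\ne v$, so no squares of variables occur and $G^f$ is again bipartite (with the same bipartition), with $E(G)\subseteq E(G^f)$. The crux is then the inequality $\reg(I(G^f))\le\reg(I(G))$ — that passing to the even-connection graph never increases the regularity of the edge ideal of a bipartite graph. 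I expect this to be the main obstacle, since regularity of edge ideals is not monotone under edge addition, so one must use the specific provenance of the new edges: each new edge $uv$ carries an even-connecting walk through $V(f)$. I would try to turn such a walk into a choice of vertex $w$ for which the deletion recursion $\reg(I(H))\le\max\{\reg(I(H\setminus w)),\ \reg(I(H\setminus N_H[w]))+1\}$, applied to $H=G^f$, peels $G^f$ back to graphs already controlled by induction on $G$; alternatively one can attempt to bound the relevant induced matching numbers, or the local cohomology of $R/I(G^f)$, directly in terms of those of $R/I(G)$.

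It remains to prove (i) for arbitrary finite simple $G$, and this is where suspension is used. The plan is to pass to the squarefree setting: polarizing the degree-$4$ generators $e_ie_j$ of $I(G)^2$ produces the Stanley–Reisner ideal $I_\Gamma$ of a simplicial complex $\Gamma$ with $\reg(I(G)^2)=\reg(I_\Gamma)$, and one computes $\reg(I_\Gamma)$ via Hochster's formula in terms of the reduced homology of induced subcomplexes (equivalently links) of $\Gamma$. The point is that these subcomplexes are, up to a (double) suspension, built from complexes attached to induced subgraphs of $G$, and a double suspension shifts reduced homology up by two in homological degree — precisely the ``$+2$'' relating $\reg(I(G)^2)$ to $\reg(I(G))$. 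Making this precise — identifying which links of $\Gamma$ are suspensions, of what, and matching internal degrees so that the homology contributing to $\reg(I_\Gamma)$ is accounted for by the homology contributing to $\reg(I(G))$ shifted by $2$ — is the technical heart of the argument, and the step I expect to require the most care. With (i) and the inductive step in hand, (ii) follows by induction on $s\ge 2$; sharpness of the resulting bound $b(I(G))\le\reg I(G)-2$ is already witnessed by complete bipartite graphs, as observed in the introduction.
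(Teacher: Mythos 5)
Your high-level architecture matches the paper's (part (i) as the base case, the colon-ideal recursion $\reg(I^{s+1})\le\max\{\reg(I^{s+1}:f)+2s,\ \reg(I^s)\}$ for the induction, and the identification of $I(G)^{s+1}:f$ as a bipartite edge ideal $I(G^f)$), but both of the steps that actually carry the proof are left open. You explicitly flag $\reg(I(G^f))\le\reg(I(G))$ as ``the main obstacle'' and only list candidate tools (deletion recursion, induced matchings, local cohomology) without executing any of them; likewise for (i) you describe a plan (polarize $I(G)^2$, apply Hochster's formula, identify links as double suspensions) and call making it precise ``the technical heart,'' again without doing it. As it stands the proposal is a roadmap, not a proof: the two lemmas on which everything rests are exactly the ones missing.

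Two concrete points of divergence from the argument that works. First, for (ii) you try to bound $\reg(I(G)^{s+1}:e_1\cdots e_s)$ in one shot for each $s$; the paper instead uses the bipartite identity $(I(G)^{s+1}:e_1\cdots e_s)=\bigl((I(G)^{s}:e_2\cdots e_s)^2:e_1\bigr)$ together with the fact that $I(G)^{s}:e_2\cdots e_s$ is again a bipartite edge ideal, so the whole problem collapses to a \emph{single} lemma: $\reg(I(H)^2:ab)\le\reg(I(H))$ for any graph $H$ and edge $ab$. Without that reduction you are facing a genuinely harder uniform statement about even-connection graphs. Second, for (i) the suspension is not a double suspension hiding in the polarization of $I(G)^2$; the ``$+2$'' comes for free from the degree shift in the short exact sequence $0\to R/(I^2:ab)(-2)\to R/I^2\to R/(I^2,ab)\to 0$. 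The real content is the inequality $\reg(I(G'))\le\reg(I(G))$ for the graph $G'$ obtained by joining $N(a)$ to $N(b)$, proved by Mayer--Vietoris on the clique complex of $(G')^c$, where the comparison with $G$ is made via the single suspension $\tilde H_l(\Sigma_{a,b}\Delta'[W_C])=\tilde H_l(\Delta[\{a,b\}\cup W_C])$; the squares $u^2$ for $u\in N(a)\cap N(b)$ are then handled by polarizing the \emph{colon} ideal and peeling off the whisker variables with short exact sequences. I would redirect your effort from the polarization of $I(G)^2$ itself to this colon-ideal formulation.
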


Part (i) is proved topologically, via Hochster's formula and various uses of Mayer-Vietoris long exact sequence. Part (ii) for $s>2$ is proved algebraically, via various uses of short exact sequences for related ideals. Our part two improves the main result of ~\cite{JNS}, which proves that if $G$ is bipartite, then for all $s\geq 2$  the $\text{ } \reg(I(G)^s) \leq 2s+\text{Cochord(G)}-1$, where Cochord(G) is the cochordal number of $G$ (see \cite{JNS} for definition). Their bound is known to be not sharp, whereas our bound is.

\textbf{Outline}: preliminaries are given in Section~2, Theorem~\ref{thm:main} is proved in Section~3, and concluding remarks are given in Section~4.

\section{Preliminaries}\label{pre}
  In this section, we set up the basic definitions and notation needed for the main results. Let $M$ be a finitely generated graded $R=\K[x_1,\ldots,x_n]$-module.
Write the graded minimal free resolution of $M$ in of the form:
\[
  0 \longrightarrow \bigoplus_{j \in \mathbb{Z}} R(-j)^{\beta_{p,j}(M)}
\overset{\psi_{p}}{\longrightarrow} \cdots \overset{\psi_1}{\longrightarrow}
\bigoplus_{j \in \mathbb{Z}} R(-j)^{\beta_{0,j}(M)}
\overset{\psi_0}{\longrightarrow} M\longrightarrow 0,
 \]
 where $p \leq n$, $R(-j)$ indicates the ring $R$ with the
shifted grading such that, for all $a \in \mathbb{Z}$, $R(-j)_{a}=R_{a-j}$.
The non-negative integers $\beta_{(i,j)}(M)$ are called
$i^{\text{th}}$-graded Betti number of $M$ in degree $j$.

The Castelnuovo-Mumford regularity (or regularity) of $M$ is defined to be
\[
 \reg(M)=\max\{j-i \mid \beta_{i,j}(M)\neq 0\}.\label{def_reg}
\]
  Let $I$ be a nonzero proper homogeneous ideal of $R$.
Then it follows from the definition
that $\reg(R/I) = \reg(I)-1.$

\text{ }  Let $I$ be any ideal of $R$ and $a\in R$ any element, the the \emph{colon ideal} $(I:a)$ is defined by $(I:a):=(b \text{ } | b\in R, ab\in I)$.

    \emph{Polarization} is a process that creates a squarefree monomial out of a monomial, possibly in a larger ring. If $f=x_1^{e_1}\ldots x_n^{e_n}$ is a monomial in $\mathbb{K}[x_1, \ldots , x_n]$ then polarization of $f$ is defined as $\tilde{f}=x_{11}\ldots x_{1e_1} x_{21}\ldots x_{2e_2}\ldots x_{n1} \ldots x_{n e_n}$ in the ring $\mathbb{K}[x_{11},\ldots x_{1e_1}, x_{21}\ldots x_{2e_2}, \ldots, \\ x_{n1} \ldots, x_{ne_n}]$.
    For convenience
    we identify the variable $x_{i1}$ with $x_i$, so the new polynomial ring extends the old one.
    For a monomial ideal $I$ with minimal monomial generators $\{m_1,\ldots, m_k\}$, we define the \emph{polarization} of $I$ as $\tilde{I}:=(\tilde{m_1}, \ldots, \tilde{m_k})$ in a suitable ring, see e.g
   \cite{HHZ} or \cite[Sec.1.6]{K1}. In the special case where degree of a variable $u=x_i$ is two in some generator we call the unique new variable $x_{i1}$ a \emph{whisker variable} and denote it by $u'$ for short.
  In this paper we repeatedly use one of the important properties of the
polarization:
\begin{theorem}(e.g.~\cite[Cor.1.6.3(a)]{K1}) Let $I$ be a monomial
ideal in $R$.
Then
$$\reg(I)=\reg(\widetilde{I}).$$
\end{theorem}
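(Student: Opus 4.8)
The plan is to reduce the statement to a single \emph{partial polarization step} and then combine two facts: that the original quotient $R/I$ is recovered from the partially polarized quotient by killing one linear nonzerodivisor, and that killing a linear nonzerodivisor leaves all graded Betti numbers unchanged. Concretely, suppose some variable $x$ occurs to a power $\geq 2$ in a minimal generator of $I$. Introduce a fresh variable $y$ and form the monomial ideal $I^* \subseteq R[y]$ obtained by replacing each minimal generator $x^a m'$ (with $x \nmid m'$, $a \geq 2$) by $x^{a-1} y\, m'$, and leaving generators with $x$-exponent $\le 1$ untouched. Since setting $y = x$ sends each modified generator $x^{a-1} y\, m'$ back to $x^a m'$, we obtain $R[y]/(I^* + (y-x)) \cong R/I$ as graded rings. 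Iterating such steps (each lowers the total excess of exponents over the squarefree case, so the process terminates) produces the full polarization $\widetilde I \subseteq \widetilde R$, and at every stage the current ideal is still monomial. Granting that $y - x$ is a nonzerodivisor on $R[y]/I^*$ at each step (the crux, see below) and the relation $\reg(J)=\reg(S/J)+1$ recalled above, the theorem follows once we show that passing to such a quotient preserves regularity.

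For the Betti-number invariance, let $\ell$ be a homogeneous linear form that is a nonzerodivisor on a finitely generated graded module $M$ over a polynomial ring $S$, and let $F_\bullet \to M$ be its graded minimal free resolution. Because $\ell$ is $M$-regular, $\Tor_i^S(M, S/\ell S) = 0$ for $i > 0$, so $F_\bullet \otimes_S S/\ell S$ is acyclic and is a free resolution of $M/\ell M$ over the polynomial ring $\overline S := S/\ell S$. It is moreover minimal: the differentials of $F_\bullet$ have entries in the graded maximal ideal $\mathfrak m_S$, and reduction modulo $\ell$ sends these to positive-degree elements of $\mathfrak m_{\overline S}$, so no unit entry is created. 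Hence $\beta^{S}_{i,j}(M) = \beta^{\overline S}_{i,j}(M/\ell M)$ for all $i,j$, and in particular $\reg_S(M) = \reg_{\overline S}(M/\ell M)$. Applying this with $S = R[y]$, $M = R[y]/I^*$, and $\ell = y - x$ at each polarization step yields $\reg(\widetilde R/\widetilde I) = \reg(R/I)$, whence $\reg(\widetilde I) = \reg(I)$.

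The main obstacle is thus the single claim that $y - x$ is a nonzerodivisor on $R[y]/I^*$. I would verify it through the associated primes. The associated primes of any monomial ideal are monomial primes, each of the form $(I^* : w)$ for a monomial $w$ and generated by a subset of the variables; and $y-x$ lies in such a prime $\mathfrak p$ if and only if \emph{both} $x \in \mathfrak p$ and $y \in \mathfrak p$ (modulo $\mathfrak p$ the two distinct variables cannot cancel otherwise). It therefore suffices to show that no associated prime of $R[y]/I^*$ contains both $x$ and $y$. This is exactly where the design of the step is used: by construction $y$ occurs in $I^*$ only inside generators that remain divisible by $x$ (each is $x^{a-1}y\,m'$ with $a-1 \geq 1$), so a direct inspection of the colon ideals $(I^* : w)$ shows that no monomial prime of this form can contain both $x$ and $y$ simultaneously. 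Consequently $y - x \notin \bigcup_{\mathfrak p \in \ass(R[y]/I^*)} \mathfrak p$, so it is a nonzerodivisor, and combining this with the two ingredients above completes the proof.
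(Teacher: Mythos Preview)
The paper does not supply its own proof of this theorem; it is quoted as a known fact with a reference to \cite[Cor.~1.6.3(a)]{K1}. Your argument is the standard one that appears in that reference and in textbooks such as Herzog--Hibi: reduce to a single polarization step, identify $R/I$ with $R[y]/(I^*,y-x)$, show that $y-x$ is a nonzerodivisor on $R[y]/I^*$, and conclude that graded Betti numbers (hence regularity) are preserved when one mods out by a linear nonzerodivisor. So there is nothing to compare---your write-up simply fills in what the paper leaves to the literature.

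One remark on rigor: the phrase ``a direct inspection of the colon ideals $(I^*:w)$ shows that no monomial prime of this form can contain both $x$ and $y$'' is the crux and deserves two more lines. A clean way to finish is: suppose $\mathfrak p=(I^*:w)$ is an associated prime with $x,y\in\mathfrak p$ and $w\notin I^*$; choose such $w$ with $y$-degree as small as possible. From $yw\in I^*$ and $w\notin I^*$ one gets $y\mid w$ is forced once you also use $xw\in I^*$ and track which generator divides $xw$ (any generator containing $y$ forces $y\mid w$; any generator not containing $y$ has $x$-exponent $\le 1$, and combining with the divisor coming from $yw\in I^*$ puts $w$ in $I^*$). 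Then $w':=xw/y$ satisfies $w'\notin I^*$, $xw'\in I^*$, $yw'\in I^*$, contradicting minimality of the $y$-degree. With this detail added, your proof is complete.
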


One of the main technique that is used in this paper is that of short exact sequences. In particular we shall use the following well known result~\cite[Lem.2.11]{AB}:

\begin{theorem}
(i) Let $I$ be a homogeneous ideal in a polynomial ring $R$ and $m$ be an element of degree $d$ in $I$. Then the following is a short exact sequence: $$0 \longrightarrow \frac{R}{(I:m)} \overset{.m} \longrightarrow \frac{R}{I} \longrightarrow \frac{R}{I+(m)} \longrightarrow 0$$
Hence: $$\reg(I) \leq \max \{\reg (I:m) +d, \reg (I+(m))\}.$$
(ii) In case $I$ is square free and $x$ a variable, then also $\reg(I,x) \leq \reg I$.
\end{theorem}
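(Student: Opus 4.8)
The plan is to handle the two parts by different means: part (i) is pure homological algebra, while part (ii) rests on the squarefree hypothesis and is cleanest via Hochster's formula.

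For part (i), first I would exhibit the sequence as a sequence of \emph{graded} modules. Since multiplication by $m$ raises degrees by $d$, the degree-preserving map is $\cdot m\colon (R/(I:m))(-d)\to R/I$, and I would verify three routine points: (a) it is well defined, because $b\in(I:m)$ gives $bm\in I$ so $b$ maps to $0$; (b) it is injective, because $bm\in I$ is exactly the condition $b\in(I:m)$, so the class of $b$ is already $0$; and (c) its image equals $(mR+I)/I=(I+(m))/I$, whence the cokernel is $R/(I+(m))$. This is the stated short exact sequence, the shift $(-d)$ being the only bookkeeping the ungraded statement suppresses.

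For the regularity inequality in (i), I would apply $\Tor^R_\bullet(-,\K)$ and read off the standard fact that $\reg B\le\max\{\reg A,\reg C\}$ for any graded short exact sequence $0\to A\to B\to C\to 0$: the long exact sequence $\cdots\to\Tor_i(A,\K)_j\to\Tor_i(B,\K)_j\to\Tor_i(C,\K)_j\to\cdots$ shows that $\beta_{i,j}(B)\neq 0$ forces $\beta_{i,j}(A)\neq 0$ or $\beta_{i,j}(C)\neq 0$. Taking $A=(R/(I:m))(-d)$, $B=R/I$, $C=R/(I+(m))$, and using $\reg(N(-d))=\reg N+d$, this yields $\reg(R/I)\le\max\{\reg(R/(I:m))+d,\ \reg(R/(I+(m)))\}$. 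Converting each term with the identity $\reg J=\reg(R/J)+1$ recorded earlier in the preliminaries turns this into exactly $\reg(I)\le\max\{\reg(I:m)+d,\ \reg(I+(m))\}$.

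For part (ii), I would use that $(I,x)$ is again a squarefree monomial ideal, say $(I,x)=I_{\Delta'}$ for its Stanley--Reisner complex $\Delta'$ on the vertex set $V$, where $I=I_\Delta$. Comparing faces, $F\in\Delta'$ iff $x_F\notin(I,x)$, i.e. iff $x\notin F$ and $F\in\Delta$; so $x$ is a ghost vertex of $\Delta'$ (it lies in no face), and for every $W\subseteq V$ with $x\notin W$ one has $\Delta'_W=\Delta_W$. By Hochster's formula, $\reg(R/I_\Delta)=1+\max_{W\subseteq V}\{\ell:\tilde{H}^{\ell}(\Delta_W;\K)\neq 0\}$, and likewise for $\Delta'$. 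A ghost vertex contributes nothing, since for $x\in W$ the complex $\Delta'_W$ has the same faces, hence the same reduced cohomology, as $\Delta'_{W\setminus x}$; thus the maximum for $\Delta'$ is attained on some $W$ with $x\notin W$, where $\Delta'_W=\Delta_W$. Consequently the induced subcomplexes governing $\reg(R/(I,x))$ form a subfamily of those governing $\reg(R/I)$, giving $\reg(R/(I,x))\le\reg(R/I)$ and therefore $\reg(I,x)\le\reg(I)$.

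The homological part (i) is routine provided the shift $(-d)$ is tracked correctly — indeed that shift is precisely what produces the $+d$ in the bound. The substantive content is (ii), where I expect the main (minor) obstacle to be the bookkeeping that $(I,x)=I_{\Delta'}$ with $x$ a ghost vertex and that restricting to induced subcomplexes can only shrink the relevant family. One could instead try to derive (ii) from (i) by taking $m=x$ (bounding $\reg(I,x)$ by $\max\{\reg I,\reg(I:x)\}$), but that would require proving $\reg(I:x)\le\reg I$, i.e. controlling the \emph{link} of $x$ rather than the deletion, which is less transparent; hence I would favor the direct Hochster/deletion argument above.
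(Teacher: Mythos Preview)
The paper does not give its own proof of this theorem; it is quoted as a well-known result with a citation to \cite[Lem.~2.11]{AB}. Your argument is correct in both parts: the exactness check and Tor long exact sequence in (i) are the standard proof, and the Hochster-formula argument in (ii), identifying $(I,x)$ with the Stanley--Reisner ideal whose complex is the deletion of $x$ (with $x$ a ghost vertex), is a clean and complete justification.
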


 Let $G$ be a finite simple graph with $V(G)=\{x_1\ldots.x_n\}$ the varibles of $R$. The the edge ideal $I(G)$ of $G$ is defined as the ideal in $R$: $$I(G)=(x_i x_j|x_ix_j \in E(G)).$$

 For example, edge ideal of a $5$-cycle is $(x_1x_2,x_2x_3,x_3x_4,x_4x_5,x_5x_1)$.

 The next couple of theorems allow for induction when increasing the power of an edge ideal.

\begin{theorem}(\cite[Thm.5.2]{AB})
For any simple graph $G$ and any $s \geq 1,$ let the set of minimal monomial generators of $I(G)^s$ be $\{m_1, \dots, m_k\}.$ Then

$$ \reg I(G)^{s+1} \leq \max \{\reg I(G)^s , \reg (I(G)^{s+1}: m_l)+2s , 1 \leq l \leq k\}.$$
\end{theorem}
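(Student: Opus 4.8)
The plan is to climb from $I^{s+1}$ up to $I^s$ one generator at a time, feeding each step into the colon short exact sequence. Write $I=I(G)$ and recall that each $m_l$ has degree $2s$. Set $J_0=I^{s+1}$ and $J_l=(I^{s+1},m_1,\dots,m_l)$ for $1\le l\le k$; since $I^{s+1}\subseteq I^s$ and the $m_l$ generate $I^s$, we have $J_k=I^s$. Applying the short exact sequence $0\to R/(J_{l-1}:m_l)\to R/J_{l-1}\to R/J_l\to 0$ and its regularity consequence (Theorem 2.2(i)) to the ideal $J_{l-1}$ and the degree-$2s$ element $m_l$ gives $\reg(J_{l-1})\le\max\{\reg(J_{l-1}:m_l)+2s,\ \reg(J_l)\}$ for each $l$. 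Chaining these from $l=1$ to $l=k$ and substituting $J_k=I^s$ collapses the right-hand sides to $\reg(I^{s+1})\le\max\{\reg(I^s),\ \reg(J_{l-1}:m_l)+2s:1\le l\le k\}$. So the theorem follows once I compare the partial colon $J_{l-1}:m_l$ with the genuine colon $I^{s+1}:m_l$ appearing in the statement.

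These two ideals genuinely differ. Since colon distributes over sums of monomial ideals, $J_{l-1}:m_l=(I^{s+1}:m_l)+\sum_{j<l}(m_j:m_l)$, where $(m_j:m_l)$ is generated by $m_j/\gcd(m_j,m_l)$, and such a generator need not lie in $I^{s+1}:m_l$ — for the path $x-y-z$ with $s=2$ one has $(x^2y^2):(y^2z^2)=(x^2)\not\subseteq I^3:(y^2z^2)=(xy,yz)$. However, the right-hand side of the bound above does not depend on how the generators are labelled, so I am free to list them decreasingly in a fixed lexicographic order, and the Key Lemma I would prove is that for this order $J_{l-1}:m_l=(I^{s+1}:m_l)+L_l$, with $L_l$ generated by variables. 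Granting it, the endgame is formal: polarising the partial colon turns it into the squarefree polarisation of $I^{s+1}:m_l$ with the variables of $L_l$ adjoined; regularity is unchanged by Theorem 2.1; each adjoined variable may then be added without raising regularity by Theorem 2.2(ii) (valid because the ideal stays squarefree); and depolarising yields $\reg(J_{l-1}:m_l)\le\reg(I^{s+1}:m_l)$, which is exactly what the telescoped bound needs.

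The Key Lemma is the hard part, and it is where the edge structure of $G$ enters. For $j<l$ the extra generator $w_j=m_j/\gcd(m_j,m_l)$ must be shown redundant modulo $I^{s+1}:m_l$ together with a single variable. When $\deg w_j\ge 2$ I would pick a variable $v\mid w_j$ along which $m_j$ exceeds $m_l$ and, using that $m_j$ and $m_l$ are products of $s$ edges of $G$, exchange an edge of $m_l$ at a shared vertex for one of $m_j$ to produce another minimal generator $m_{j'}=m_l\cdot v/v'$ of $I^s$ with $v$ larger than $v'$; then $m_{j'}$ is lexicographically above $m_l$, hence already processed ($j'<l$), and $(m_{j'}:m_l)=(v)$, so the variable $v\in J_{l-1}:m_l$ absorbs $w_j$. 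Iterating on $\deg w_j$ reduces every extra generator to variables and produces $L_l$. The genuine content — that such an exchange always lands on a bona fide generator of $I^s$ and can be arranged with $v>v'$ so that $m_{j'}$ precedes $m_l$ — is precisely the combinatorics of colon ideals of powers of edge ideals (the even-connection description of $I^{s+1}:m_l$), and making this exchange argument uniform across the whole filtration is the main obstacle; everything else is a formal consequence of Theorems 2.1 and 2.2.
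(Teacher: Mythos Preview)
The paper does not prove this theorem; it is quoted verbatim from \cite[Thm.~5.2]{AB} and used as a black box. So there is no ``paper's own proof'' to compare against. That said, your outline is precisely the architecture of the original proof in \cite{AB}: filter from $I^{s+1}$ to $I^s$ by adjoining the degree-$2s$ generators one at a time, apply Theorem~2.2(i) at each step, and reduce to showing that each partial colon $(I^{s+1},m_1,\dots,m_{l-1}):m_l$ equals $(I^{s+1}:m_l)$ plus an ideal of variables for a suitable ordering of the $m_l$. Your polarize--then--strip--variables maneuver to pass from $\reg(J_{l-1}:m_l)$ to $\reg(I^{s+1}:m_l)$ is correct and is exactly how the variable summand is disposed of.

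The one place to be careful is the exchange sketch for the Key Lemma. Your description ``exchange an edge of $m_l$ at a shared vertex for one of $m_j$'' presupposes that the variable $v\mid w_j$ you select is adjacent (in $G$) to some vertex appearing in $m_l$, and that the resulting swap can be arranged with $v'<v$; neither is automatic from $m_j>_{\mathrm{lex}}m_l$ alone. In \cite{AB} this step is not done by a single local swap but via the even-connection description of $(I^{s+1}:m_l)$ (their Theorem~6.1/6.7), which shows directly that every $w_j$ either already lies in $(I^{s+1}:m_l)$ or is divisible by a variable that does arise as some $(m_{j'}:m_l)$ with $m_{j'}$ preceding $m_l$. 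You have correctly flagged this as ``the main obstacle''; just be aware that the lex-order exchange heuristic as written does not close it, and the cited even-connection machinery is genuinely what carries the weight.
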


 By identifying the variables with the vertices of $G$, interpreting edges as square free quadratic monomials, defining neighborhood for any vertex $c$,  $N(c):=\{z\in V(G): cz \in E(G)\}$ and using  \cite[Thm.5.2]{AB}  we get the following corollary:

\begin{corollary}\label{cor:I^2:e}
(i) The ideal $(I(G)^{s+1}: m_l)$ is a quadratic monomial ideal, and,

(ii) for the special case where $s=1$ and $m=ab$ is an edge then $$(I(G)^2:ab)=I(G)+(xy| x\in N(a),y\in N(b)).$$
\end{corollary}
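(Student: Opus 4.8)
\emph{Setup.} Both parts rely on the elementary description of colon ideals of monomials: for a monomial ideal $J$ and a monomial $m$, the ideal $(J:m)$ is the monomial ideal generated by the monomials $n/\gcd(n,m)$ with $n$ ranging over a monomial generating set of $J$; in particular a monomial $u$ is a minimal generator of $(J:m)$ only if $u=n/\gcd(n,m)$ for some $n$ in that generating set, and it fails to be minimal precisely when some proper divisor of $u$ already lies in $(J:m)$. I apply this with $J=I(G)^{s+1}$, which is generated by the products $f_1\cdots f_{s+1}$ of $s+1$ (not necessarily distinct) edges of $G$, and $m=m_l$, which is itself a product $g_1\cdots g_s$ of $s$ edges.

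\emph{Part (i).} Because $m_l\in I(G)^s$, every edge $e$ satisfies $e\,m_l\in I(G)^{s+1}$, so $I(G)\subseteq (I(G)^{s+1}:m_l)$; it remains to show that a minimal generator $u$ of the colon not divisible by any edge has degree $2$. Writing $u=(f_1\cdots f_{s+1})/\gcd(f_1\cdots f_{s+1},m_l)$, the point is that such a $u$ is automatically divisible by a quadratic monomial $xy$ that itself lies in $(I(G)^{s+1}:m_l)$, forcing $u=xy$ by minimality. This is the ``even connection'' mechanism underlying \cite[Thm.~5.2]{AB}: a counting/matching argument comparing the $s+1$ edges $f_i$ with the $s$ edges $g_j$ of $m_l$ produces an alternating walk $x=p_0,p_1,\dots,p_{2t+1}=y$ in $G$ whose even-indexed steps $p_{2i-1}p_{2i}$ lie among the $g_j$, whose odd-indexed steps are edges of $G$, and whose endpoints satisfy $xy\mid u$ and $xy\,m_l\in I(G)^{s+1}$. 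Hence $(I(G)^{s+1}:m_l)$ is generated in degrees $1$ and $2$. The one genuinely delicate step is this matching argument for $s\ge 2$; I would simply quote the structural (not merely numerical) form of \cite[Thm.~5.2]{AB}, under which (i) is immediate, since an even-connected pair is by definition a pair of vertices.

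\emph{Part (ii).} Now $s=1$ and $m_l=ab\in E(G)$, and I argue the equality directly. For ``$\supseteq$'': $e\cdot ab=(e)(ab)\in I(G)^2$ for every edge $e$, and if $x\in N(a),y\in N(b)$ then $xy\cdot ab=(xa)(yb)\in I(G)^2$ because $xa,yb\in E(G)$ (the degenerate choices $x=b$, $y=a$, or $x=y$ still display $xy\cdot ab$ as a product of two edges). For ``$\subseteq$'': let $u$ be a minimal generator of $(I(G)^2:ab)$, so $u=(f_1f_2)/\gcd(f_1f_2,ab)$ for edges $f_1,f_2$. If $f_1=ab$ (or $f_2=ab$), then $\gcd(f_1f_2,ab)=ab$ and $u=f_2$ (resp.\ $f_1$) is an edge. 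If some $f_i$ is disjoint from $\{a,b\}$, then $f_i\mid u$ and $f_i\in E(G)\subseteq (I(G)^2:ab)$, so $u=f_i\in E(G)$ by minimality. Otherwise each $f_i$ contains exactly one of $a,b$; were both to contain $a$, say $f_1=ax,f_2=ay$ with $x,y\neq b$, then $u=(a^2xy)/a=axy$ would be a proper multiple of the edge $ax\in (I(G)^2:ab)$, contradicting minimality, and symmetrically for $b$; so one of $f_1,f_2$ equals $ax$ with $x\in N(a)$ and the other equals $by$ with $y\in N(b)$, whence $\gcd(abxy,ab)=ab$ and $u=xy$ with $x\in N(a),y\in N(b)$ (allowing $x=y$, i.e.\ $u=x^2$). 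This proves $(I(G)^2:ab)=I(G)+(xy\mid x\in N(a),\,y\in N(b))$, and in particular that this colon ideal is quadratic, which is the case $s=1$ of (i). Part (ii) is just the explicit form of the even-connection description of \cite[Thm.~5.2]{AB} for a single edge, and it is what the later sections use.
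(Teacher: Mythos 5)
Your proposal is correct. For part (i) you do exactly what the paper does: the paper offers no proof beyond the citation of \cite[Thm.5.2]{AB} (more precisely, the structural ``even-connection'' description of $(I(G)^{s+1}:m_l)$ from that reference), and you likewise defer the one genuinely nontrivial step to that source while correctly noting the easy inclusion $I(G)\subseteq(I(G)^{s+1}:m_l)$ and that no linear generators can occur. For part (ii) you depart from the paper in a useful way: instead of reading the formula off as the $s=1$ instance of the even-connection theorem, you give a self-contained elementary verification using only the standard fact that $(J:m)$ is generated by $n/\gcd(n,m)$ for $n$ ranging over the monomial generators $f_1f_2$ of $J=I(G)^2$, followed by a clean case analysis on how the pair $\{f_1,f_2\}$ meets $\{a,b\}$ (one $f_i$ equal to $ab$; one $f_i$ disjoint from $\{a,b\}$; both meeting the same endpoint; one meeting each endpoint). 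The case analysis is exhaustive and each case is handled correctly, including the degenerate situations $x=y$ (yielding $u^2$ for $u\in N(a)\cap N(b)$, which is exactly the non-squarefree part the paper later polarizes) and $x=b$ or $y=a$. What your route buys is independence from the black box for the only case ($s=1$) that the rest of the paper actually computes with explicitly; what the paper's route buys is uniformity, since the same citation covers all $s$ at once.
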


 For bipartite graphs we further have:

\begin{theorem}(\cite[Prop.3.5, Lem.3.7]{AA})
For any finite simple bipartite graph $G$ we have $(I(G)^{s+1}:e_1\cdots e_s)=((I(G)^{k+1}:e_1\cdots e_k)^{s-k} : e_{k+1}\cdots e_s)$ where for all $i$ we have $e_i \in E(G)$.

Further,
$(I(G)^{k+1}:e_1\cdots e_k)$ is an edge ideal of a bipartite graph on same bipartite partition of vertices as $G$.
\end{theorem}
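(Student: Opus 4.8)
The plan is to derive both assertions from Banerjee's even-connection description of colon ideals of powers of edge ideals (see \cite{AB}). Recall that for a finite simple graph $H$, edges $f_1,\dots,f_m\in E(H)$ and $m\ge 1$, the ideal $(I(H)^{m+1}:f_1\cdots f_m)$ is generated by the edge generators of $I(H)$ together with all quadratic monomials $uv$ for which $u,v$ are \emph{even-connected with respect to $f_1,\dots,f_m$}: there is a walk $u=p_0,p_1,\dots,p_{2\ell+1}=v$ with $\ell\ge 1$ whose steps $\{p_0,p_1\},\{p_2,p_3\},\dots$ lie in $E(H)$ and whose steps $\{p_1,p_2\},\{p_3,p_4\},\dots$ form a sub-multiset of $\{f_1,\dots,f_m\}$. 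In particular this colon ideal is quadratic, which is reflected in Corollary~\ref{cor:I^2:e}(i). Everything below amounts to translating inclusions between such colon ideals into surgery on these walks.

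\emph{The ``Further'' assertion.} Apply the description with $H=G$ and $f_i=e_i$. Since $G$ is bipartite with parts $X\sqcup Y$, every walk of odd length joins a vertex of $X$ to a vertex of $Y$; hence each even-connected pair $\{u,v\}$ has $u\neq v$ and straddles the bipartition, and is therefore a squarefree monomial $xy$ with $x\in X$, $y\in Y$, exactly like the edge generators of $I(G)$. Since also $I(G)\subseteq(I(G)^{k+1}:e_1\cdots e_k)$, we conclude $(I(G)^{k+1}:e_1\cdots e_k)=I(G_k)$ for the bipartite graph $G_k$ on $X\sqcup Y$ with $E(G)\subseteq E(G_k)$. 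The same holds with $k$ replaced by any $k'\le s$, which is what powers the induction below.

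\emph{The reassociation identity.} With $I(G_k)=(I(G)^{k+1}:e_1\cdots e_k)$ as above and $e_{k+1},\dots,e_s\in E(G)\subseteq E(G_k)$, I would argue by induction on $s-k$, the crux being the one-step case $k=s-1$, namely
\[
(I(G)^{s+1}:e_1\cdots e_s)=\big(I(G_{s-1})^2:e_s\big).
\]
Here the right-hand side is computed explicitly by Corollary~\ref{cor:I^2:e}(ii): writing $e_s=ab$, it equals $I(G_{s-1})+(xy\mid x\in N_{G_{s-1}}(a),\;y\in N_{G_{s-1}}(b))$. One then compares generators. A generator $uv$ of the left side is either an edge of $G$ (hence in $I(G)\subseteq I(G_{s-1})$), or is even-connected in $G$ with respect to $\{e_1,\dots,e_s\}$; in the latter case, if a realizing walk never uses $e_s=\{a,b\}$ among its ``$f$-steps'' then that walk already certifies $uv\in I(G_{s-1})$, while if it does, cutting the walk at such a step exhibits $u$ as a neighbour of $a$ in $G_{s-1}$ and $v$ as a neighbour of $b$ in $G_{s-1}$ --- bipartiteness ensuring each half has odd length and hence inherits the alternating pattern. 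Conversely, from $u\in N_{G_{s-1}}(a)$ and $v\in N_{G_{s-1}}(b)$ one concatenates a witnessing even-connection from $u$ to $a$, the edge $ab=e_s$, and a witnessing even-connection from $b$ to $v$; the result has odd length and the correct alternating structure, certifying $uv\in(I(G)^{s+1}:e_1\cdots e_s)$. Feeding this one-step identity into the induction --- each intermediate ideal being a bipartite edge ideal by the ``Further'' part --- gives the general statement.

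\emph{The main obstacle.} The delicate point is the bookkeeping of multiplicities of $e_1,\dots,e_s$ under cutting and gluing of walks: when two witnessing walks, from $u$ to $a$ and from $b$ to $v$, are concatenated, the edges among $e_1,\dots,e_{s-1}$ they jointly consume must still form a sub-multiset of those available, and dually a long even-connection in $G$ must be splittable so that the two resulting pieces use disjoint portions of that multiset. Showing that such choices can always be made consistently is the real content of the proof; bipartiteness is used throughout to force each relevant sub-walk to have odd length, so that the even/odd alternation of steps survives both operations. As an alternative to this combinatorial bookkeeping, one may instead exploit the normality of the Rees algebra of $I(G)$ --- valid precisely because $G$ is bipartite --- to control the colon ideals one primary component at a time.
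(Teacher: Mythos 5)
First, note that the paper does not prove this theorem at all: it is quoted verbatim from Alilooee--Banerjee \cite[Prop.~3.5, Lem.~3.7]{AA}, so there is no in-paper proof to compare against. Your proposal is nonetheless pitched in exactly the right framework --- Banerjee's even-connection description of $(I(H)^{m+1}:f_1\cdots f_m)$ --- which is also the engine of the original proof in \cite{AA}. Your argument for the ``Further'' assertion is complete and correct: in a bipartite graph every even-connection is realized by a walk of odd length, hence joins the two sides of the bipartition, so the colon ideal is generated by squarefree quadrics straddling the same bipartition and is the edge ideal of a bipartite graph containing $G$. One small but important point you handled silently: as literally printed, the exponent $s-k$ in the statement is a typo (with $k=s-1$ it would read $(I(G)^{s}:e_1\cdots e_{s-1}):e_s$, which is the unit ideal since $e_s$ already lies in that edge ideal); the correct exponent is $s-k+1$, which is the version you actually prove and the version the paper uses in the proof of Theorem~\ref{thm:main}(ii).

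For the reassociation identity itself, however, there is a genuine gap, and you name it yourself: the multiset bookkeeping under cutting and gluing of witnessing walks. This is not a routine verification --- it is the entire content of \cite[Prop.~3.5]{AA}. Concretely, in the gluing direction, from $u\in N_{G_{s-1}}(a)$ and $v\in N_{G_{s-1}}(b)$ you obtain two even-connections in $G$, each consuming a sub-multiset of $\{e_1,\dots,e_{s-1}\}$, and nothing forces these two sub-multisets to be disjoint inside the available multiset; if both walks insist on using the single available copy of some $e_i$, the naive concatenation is not a legal even-connection with respect to $\{e_1,\dots,e_s\}$. (A purely algebraic shortcut also fails here: $xy\cdot e_1\cdots e_s=(xa)(yb)e_1\cdots e_{s-1}$ with $xa,yb$ edges only of $G_{s-1}$, not of $G$, so one cannot simply absorb them into $I(G)^{s+1}$.) Resolving this requires an argument that witnessing walks can be rerouted or rechosen compatibly --- this is where bipartiteness is used beyond parity --- and your sketch does not supply it. The alternative you float via normality of the Rees algebra of a bipartite edge ideal is plausible folklore but is likewise not carried out. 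So: right strategy, complete proof of the second assertion, but the first assertion remains a sketch with its hardest step deferred; to make it self-contained you would need to either supply the rerouting lemma or, as the paper does, cite \cite{AA} for it.
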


 Now we recall some basic definitions about graphs and simplicial complex that will be useful.

Let $G$ be a finite simple graph with vertex set $V(G)$
and edge set $E(G)$.  A subgraph $H \subseteq G$  is called \textit{induced} if $\{u,v\}$ is an edge of $H$ if and only if $u$ and $v$ are vertices of $H$ and $\{u,v\}$ is an edge of $G$.

For $u\in V(G)$, let $N_G(u) = \{v \in V (G)\mid \{u, v\} \in E(G)$ and $N_G[u]=N_G(u)\cup \{u\}$.
For $U \subseteq V(G)$, denote by $G \setminus U$
the induced subgraph of $G$ on the vertex set $V(G) \setminus U$.

Let $G$ be a graph. We say $2$ disjoint edges
  $\{f_1,f_2\}$ form an $2K_2$ in $G$ if $G$ does not have an edge with one
endpoint in $f_1$ and the other in $f_2$. A graph without $2K_2$ is called $2K_2$-free or \textit{gap-free} graph. The \textit{complement} of a graph $G$, denoted by $G^c$, is the graph on the same
vertex set in which $\{u,v\}$ is an edge of $G^c$ if and only if it is not an edge of $G$.
Then  $G$ is gap-free if and only if $G^c$ contains no induced $4$-cycle.

A graph $G$ is chordal (also called triangulated) if every induced
cycle in $G$ has length $3$, and is co-chordal if the complement graph $G^c$ is chordal.
 The following important theorem(s) characterizes the edge ideals with regularity 2, and the regularity of their powers.


\begin{theorem}
(i) Fr\"oberg~(\cite[Thm.1]{Fro}): For any finite simple graph $G$, we have $G^c$ is chordal if and only if $\reg(I(G))=2$. 

(ii)Herzog-Hibi-Zheng~(\cite[Thm.1.2]{HHZh}) Further, in this case $\reg(I(G)^s)=2s$ for any $s$.
\end{theorem}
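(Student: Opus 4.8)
The statement has two parts, and since it is classical I would reconstruct the standard proofs, adapting them to the present notation. For part (i), note first that if $E(G)\neq\emptyset$ then $I(G)$ is generated in degree $2$, so $\beta_{0,2}(I(G))\neq 0$ and hence $\reg I(G)\geq 2$; the content is therefore the equivalence $G^c$ chordal $\iff\reg I(G)\leq 2$. The plan is to translate this into topology via Hochster's formula. The Stanley--Reisner complex of $I(G)$ is the independence complex $\Delta$ of $G$, whose faces are the independent sets of $G$, equivalently the cliques of $G^c$; thus $\Delta=\operatorname{Cl}(G^c)$ and every induced subcomplex satisfies $\Delta_W=\operatorname{Cl}(G^c[W])$. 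Hochster's formula expresses $\reg(R/I(G))$ as $1$ plus the top degree $\ell$ in which some $\Delta_W$ has nonvanishing reduced homology, so $\reg I(G)=\reg(R/I(G))+1\leq 2$ if and only if $\tilde H_\ell(\operatorname{Cl}(G^c[W]);\K)=0$ for all $W$ and all $\ell\geq 1$.

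It then remains to show that this vanishing condition on clique complexes of all induced subgraphs of $G^c$ is equivalent to chordality of $G^c$. For the forward direction I would use Dirac's theorem (a chordal graph has a simplicial vertex) to run an induction along a perfect elimination ordering: a simplicial vertex is dominated by any of its neighbors, so collapsing it folds $\operatorname{Cl}(G^c)$ onto $\operatorname{Cl}(G^c\setminus v)$, whence each connected chordal graph has a collapsible, hence contractible, clique complex. As chordality is hereditary this applies to every induced subgraph, and a disconnected chordal graph then carries reduced homology only in degree $0$. For the converse, if $G^c$ is not chordal it contains an induced cycle $C_k$ with $k\geq 4$; since such a cycle has no chords and hence no triangles, $\operatorname{Cl}(C_k)$ is the $1$-dimensional cycle, homotopy equivalent to $S^1$, so $\tilde H_1\neq 0$. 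Taking $W$ to be its vertex set violates the vanishing condition and forces $\reg I(G)\geq 3$. This proves (i).

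For part (ii) the lower bound is immediate: $I(G)^s$ is minimally generated in degree $2s$, so $\beta_{0,2s}\neq 0$ and $\reg I(G)^s\geq 2s$. The work is the upper bound $\reg I(G)^s\le 2s$, which I would obtain by induction on $s$ using the recursion $\reg I(G)^{s+1}\le\max\{\reg I(G)^s,\ \reg(I(G)^{s+1}:m_\ell)+2s\}$ over the minimal generators $m_\ell$ of $I(G)^s$. The base case $s=1$ is part (i). By Corollary~\ref{cor:I^2:e} each $(I(G)^{s+1}:m_\ell)$ is a quadratic monomial ideal; if I can show that every such colon ideal again has regularity at most $2$, the recursion yields $\reg I(G)^{s+1}\le\max\{2s,\,2+2s\}=2(s+1)$, closing the induction.

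The main obstacle is exactly this last point. Since a colon ideal $(I(G)^{s+1}:m_\ell)$ may contain squares $x^2$ (already for $s=1$ when the endpoints $a,b$ share a common neighbor, by Corollary~\ref{cor:I^2:e}(ii)), I would first polarize it; by the regularity-preserving property of polarization recalled above this produces the edge ideal of a genuine graph with the same regularity, so by part (i) it suffices to prove that the complement of this graph is chordal. I would attack this combinatorially, describing the quadratic and squared generators of the colon ideal through the neighborhoods of the endpoints of the edges composing $m_\ell$, and showing that a chordless induced cycle of length $\geq 4$ in the complement would pull back to a forbidden configuration in $G^c$, contradicting its chordality. This preservation of co-chordality under the colon-and-polarize operation is the heart of the Herzog--Hibi--Zheng result and the step I expect to demand the most care.
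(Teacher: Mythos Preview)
The paper does not give a proof of this theorem; it is stated with citations to Fr\"oberg and Herzog--Hibi--Zheng and used as a black box. So there is no in-paper argument to compare against.

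On its own merits: your reconstruction of part (i) via Hochster's formula together with Dirac's simplicial-vertex theorem and a perfect-elimination collapse is the standard proof and is correct.

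For part (ii), your route is \emph{not} the Herzog--Hibi--Zheng argument. They do not use the colon-ideal recursion of Theorem~2.3 (which postdates their paper); rather, they show directly that $I(G)^s$ has linear quotients for every $s$ when $G^c$ is chordal, by producing an explicit admissible ordering of the degree-$2s$ generators. Your alternative plan---polarize $(I(G)^{s+1}:m_\ell)$ and show the resulting graph is again co-chordal, then invoke part (i)---would work in principle, but you stop precisely at the substantive step. For $s=1$ the description in Corollary~\ref{cor:I^2:e}(ii) makes this tractable by hand; for $s>1$ you would need the full ``even-connection'' description of the colon from \cite{AB}, and the claim that adding all even-connected pairs (plus whiskers from polarization) to $G$ preserves chordality of the complement is a genuine combinatorial lemma that your proposal leaves unproved. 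As written, part (ii) is therefore a plan with a real gap, not a proof.
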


  A \emph{simplicial complex} $\Delta$ on a vertex set $\{1,\ldots, n\}$ is a collection of subsets of $\{1,\ldots ,n\}$ such that if $\tau \in \Delta, \sigma \subseteq \tau$ then we have $\sigma \in \Delta$. The \emph{induced subcomplex} $\Delta [A]$ of $\Delta$ on vertex set $A\subset \{1,\ldots,n\}$ is the collection of faces $\tau'$ of $\Delta$ such that $\tau' \subset A$. Clearly the induced subcomplex is a simplicial complex itself. We denote by $\Vr(\Delta)$ the set of vertices of $\Delta$.

 The \emph{link} of a face $d$ in$ \Delta$ is: $$\text{link}_d (\Delta)= \{ \tau |\tau \cup \{d\} \in \Delta, \{d\} \cap \tau =\emptyset\}$$

 The (open) \emph{star} of a face $\sigma$ in a simplicial complex $\Delta$ is the set of all faces that contain $\sigma$, namely $\St_d(\Delta)=\{\tau| \tau \in \Delta, d \in \tau\}$. The \emph{closed star} $\bar{\St}_d (\Delta$ ) of $d$ is defined by the smallest subcomplex that contains $\St_d(\Delta)$. The \emph{antistar} is defined as the subcomplex $\text{ast}_d (\Delta) = \{\tau \in \Delta| \tau \cap \{d\} = \emptyset\}$.



 The \emph{join} of two simplicial complexes $\Delta_1$ and $\Delta_2$ is defined by $\Delta_1 * \Delta_2=\{\sigma \cup \tau| \sigma \in \Delta_1, \tau \in \Delta_2\}$. The \emph{suspension} of a simplicial complex $\Delta$, w.r.t. two points $a$ and $b$, is the join defined by $\Sigma_{a,b} \Delta=\Delta * \{\{a\},\{b\},\{\emptyset\}\}$; its geometric realization is homeomorphic to the topological suspension of the space $\Delta$.

 Let $G$ be a finite simple graph and $\Delta=\cl(G^c)$, where $\cl(G)$ denotes the corresponding clique complex of a graph $G$.

The following formulation of regularity follows from the so called Hochster's formula
(See {\cite{MS}}for further details):

\begin{theorem}[Hochster's formula]
For any finite simple graph $G$ whose edge set is nonempty and $\Delta=\cl(G^c)$ we have:
$$\reg(G):=\reg(I(G))= \max\{l+2:\exists W \subseteq \Vr[\Delta], \tilde{H}_l(\Delta[W];k)\neq 0\}.$$
\end{theorem}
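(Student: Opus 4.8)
The plan is to read this off the classical Hochster formula for Stanley--Reisner ideals, using only the two elementary facts that $\reg(I)=\reg(R/I)+1$ (recorded above) and that $I(G)$ is itself a Stanley--Reisner ideal. Recall the statement: if $\Gamma$ is a simplicial complex on vertex set $[n]$ with Stanley--Reisner ideal $I_\Gamma\subseteq R$, then the $\mathbb{Z}^n$-graded Betti numbers of $R/I_\Gamma$ vanish outside squarefree degrees, and for $W\subseteq[n]$ one has
$$\beta_{i,\,W}(R/I_\Gamma)=\dim_\K\tilde{H}_{|W|-i-1}(\Gamma[W];\K).$$
Passing to the coarse $\mathbb{Z}$-grading, $\beta_{i,j}(R/I_\Gamma)\neq0$ if and only if there is some $W\subseteq[n]$ with $|W|=j$ and $\tilde{H}_{j-i-1}(\Gamma[W];\K)\neq0$.

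First I would identify $I(G)$ as $I_\Delta$ for $\Delta=\cl(G^c)$. The minimal nonfaces of $\cl(G^c)$ are exactly the two-element subsets that are not cliques of $G^c$, i.e. the non-edges of $G^c$, i.e. the edges of $G$; hence the minimal monomial generators of $I_\Delta$ are the $x_ux_v$ with $\{u,v\}\in E(G)$, which is precisely $I(G)$. Since a single vertex is always a clique, $\Vr[\Delta]=V(G)=[n]$, so it is harmless that the theorem ranges over $W\subseteq\Vr[\Delta]$. The hypothesis $E(G)\neq\emptyset$ guarantees that $I(G)$ is a nonzero proper ideal, so $\reg(I(G))$ and $\reg(R/I(G))$ are both defined; it also guarantees that the right-hand maximum is taken over a nonempty index set, since for any edge $\{u,v\}$ the induced subcomplex $\Delta[\{u,v\}]$ is a pair of disjoint points, whence $\tilde{H}_0(\Delta[\{u,v\}];\K)\neq0$.

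It then remains only to unwind the indices. Setting $l=j-i-1$, equivalently $j-i=l+1$, the above yields
$$\reg(R/I(G))=\max\{\,j-i:\beta_{i,j}(R/I(G))\neq0\,\}=\max\{\,l+1:\exists\,W\subseteq\Vr[\Delta],\ \tilde{H}_l(\Delta[W];\K)\neq0\,\},$$
and adding $1$ via $\reg(I(G))=\reg(R/I(G))+1$ produces the asserted formula, with $l+2$ in place of $l+1$. The argument is essentially bookkeeping; the one point that genuinely needs care is the homological index shift --- keeping consistent the ``$-1$'' built into Hochster's formula (including the convention $\tilde{H}_{-1}(\{\emptyset\};\K)\cong\K$, which is what accounts for $\beta_{0,0}(R/I(G))\neq0$, i.e. for $W=\emptyset$ contributing $l=-1$ on the right) together with the ``$+1$'' picked up when passing from $R/I(G)$ to $I(G)$. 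I do not expect any genuine obstacle beyond this.
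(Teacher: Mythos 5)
Your proposal is correct and is exactly the route the paper intends: the paper states this result without proof, merely noting that it ``follows from the so called Hochster's formula'' and citing \cite{MS}, and your argument is precisely that standard derivation --- identifying $I(G)$ with the Stanley--Reisner ideal of $\cl(G^c)$, invoking the squarefree-degree Hochster formula for the Betti numbers of $R/I_\Delta$, and carrying out the index shift from $\reg(R/I(G))$ to $\reg(I(G))$. The bookkeeping (including the $W=\emptyset$ and nonempty-edge-set remarks) is handled correctly, so there is nothing to add.
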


\section{Main Results}
Let $G$ be a finite simple graph, $ab \in E(G)$ and $G'=G \cup \{xy: x\neq y,  ax,by \in E(G)\}$. Let $\Delta=\cl(G^c)$ and $\Delta'=\cl(G'^c)$.
 We first prove $\reg(I(G)^2:e) \leq \reg(I(G))$ for every edge $e$ of $G$. This will lead us to our main result via a series of short exact sequence arguments. For that we first prove:

\begin{theorem}\label{thm:reg(D')}
 $\reg (\Delta ' ) \leq \reg(\Delta).$
\end{theorem}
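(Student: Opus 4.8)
The plan is to argue entirely through the Hochster-type formula quoted above: by that formula it suffices to prove that for every $W\subseteq V(G)$ and every $l$, if $\tilde H_l(\Delta'[W];\K)\neq 0$ then $l+2\le\reg(\Delta)$, i.e. some induced subcomplex of $\Delta$ has nonvanishing $l$-th reduced homology. Two elementary facts about $G'$ are used constantly: since $ab\in E(G)$, $a$ and $b$ are non-adjacent in $G^c$ (so $\{a,b\}\notin\Delta,\Delta'$), and $N_{G'}(a)=N_G(a)$, $N_{G'}(b)=N_G(b)$, because every new edge of $G'$ joins $N_G(a)$ to $N_G(b)$ and, as $b\in N_G(a)$ and $a\in N_G(b)$, any new edge meeting $a$ or $b$ already lies in $G$. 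Also, a vertex set disjoint from $N_G(a)$ (or from $N_G(b)$) carries no new edge of $G'$, so $\Delta'$ and $\Delta$ agree on it.

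First I would reduce to the case $a,b\in W$. If $a\notin W$, apply Mayer--Vietoris to $\Delta'[W\cup\{a\}]=\bar{\St}_a\big(\Delta'[W\cup\{a\}]\big)\cup\Delta'[W]$: the closed star is a cone, hence acyclic, and the intersection is $\link_a(\Delta'[W\cup\{a\}])=\Delta'[W\setminus N_G(a)]=\Delta[W\setminus N_G(a)]$, an induced subcomplex of $\Delta$. The sequence gives that either $\tilde H_l(\Delta[W\setminus N_G(a)];\K)\neq 0$ (and we are done), or $\tilde H_l(\Delta'[W];\K)\hookrightarrow\tilde H_l(\Delta'[W\cup\{a\}];\K)$, in which case we replace $W$ by $W\cup\{a\}$. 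Iterating this, and symmetrically for $b$, terminates and leaves us with $a,b\in W$.

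Assume $a,b\in W$. The structural heart of the argument is the decomposition $\Delta'[W]=A_1\cup A_2\cup A_3$, where $A_1=\Delta[W\setminus N_G(b)]$, $A_2=\Delta[W\setminus N_G(a)]$, and $A_3=\{\sigma\in\Delta[W]:\sigma\cap(N_G(a)\triangle N_G(b))=\emptyset,\ |\sigma\cap N_G(a)\cap N_G(b)|\le 1\}$, read off by checking exactly which faces of $\Delta[W]$ remain faces of $\Delta'[W]$. Here $A_1$ is a cone with apex $b$ and $A_2$ a cone with apex $a$ (in $G^c$ the vertex $b$ is joined to all of $W\setminus N_G[b]$, and similarly $a$), hence both acyclic; one computes $A_1\cap A_2=A_1\cap A_3=A_2\cap A_3=B_\star$ with $B_\star:=\Delta[W\setminus(N_G(a)\cup N_G(b))]$; and $A_3$ is $B_\star$ with, for each common neighbour $v\in N_G(a)\cap N_G(b)\cap W$, the cone $v*L_v$ glued along $L_v:=\Delta[(W\setminus(N_G(a)\cup N_G(b)))\setminus N_G(v)]$. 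Running Mayer--Vietoris first on $\Delta'[W]=(A_1\cup A_2)\cup A_3$ and then, inside $A_3$, attaching the cones $v*L_v$ one at a time, every term in the resulting sequences is a finite sum of reduced homology groups of induced subcomplexes of $\Delta$: either in degree $l$ (which already yields $l+2\le\reg(\Delta)$), or in degree $l-1$ of one of the complexes $B_\star$ or $L_v$, whose vertex sets are disjoint from $N_G(a)\cup N_G(b)$.

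This is exactly the point where suspension removes the Mayer--Vietoris degree shift. If $S\subseteq V(G)$ is disjoint from $N_G(a)\cup N_G(b)$ then in particular $a,b\notin S$, and in $G^c$ each of $a,b$ is adjacent to every vertex of $S$ while $a,b$ are not adjacent to each other; therefore $\Delta[S\cup\{a,b\}]=\Sigma_{a,b}\big(\Delta[S]\big)$, which is an induced subcomplex of $\Delta$, and $\tilde H_{l-1}(\Delta[S];\K)\cong\tilde H_l(\Delta[S\cup\{a,b\}];\K)$. Applying this with $S$ the vertex set of $B_\star$ or of some $L_v$, each potentially obstructing term $\tilde H_{l-1}(\Delta[S];\K)\neq 0$ upgrades to $\tilde H_l$ of an induced subcomplex of $\Delta$, giving $l+2\le\reg(\Delta)$ and finishing the proof. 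I expect the genuinely laborious step to be the one before: establishing $\Delta'[W]=A_1\cup A_2\cup A_3$, pinning down the pairwise intersections and the description of $A_3$ as $B_\star$ with cones attached, and then checking that iterated Mayer--Vietoris produces only error terms of the ``supported off $N_G(a)\cup N_G(b)$'' shape so that the suspension identity applies to each of them.
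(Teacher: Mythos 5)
Your proof is correct and follows essentially the same route as the paper: the same decomposition of $\Delta'[W]$ into the two pieces supported off $N_G(a)$ and off $N_G(b)$ plus the piece built from $\Delta[W\setminus(N_G(a)\cup N_G(b))]$ by coning over common neighbours, the same iterated Mayer--Vietoris, and the same suspension identity $\Delta[S\cup\{a,b\}]=\Sigma_{a,b}\Delta[S]$ to upgrade the degree-$(l-1)$ error terms. The only cosmetic difference is your preliminary reduction to $a,b\in W$, which is harmless but unnecessary, since $A_1$ and $A_2$ are induced subcomplexes of $\Delta$ in any case and their $\tilde H_l$ can be killed by the standing assumption on $\reg(\Delta)$ rather than by a cone argument.
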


\begin{proof}

Let $A=\Vr(\bar{\St}_a (\Delta)),B=\Vr(\bar{\St}_b(\Delta)), C=A\cap B$ and $D=\Vr(\Delta)-(A\cup B)$. With this notation we observe the following:$$ \Delta'=(\Delta[A]\cup_{\Delta[C]} \Delta[B])\cup_{\Delta[C]} (\Delta[C] \cup_{d\in D} (\{d\}*\Delta[C \cap \Vr(\St_d (\Delta))]))$$

 We get this equality simply from the definition of $G'$, where every neighbour of $a$ is connected to every neighbour of $b$.\\

 Let $reg(\Delta') = l+2$ and $W$ be such that $\tilde{H}_l (\Delta'[W]) \neq 0$. Assume by contradiction
  $\reg(\Delta) < l+2$.

 Decompose $W=W_1 \cup_{W_c} W_2 \subseteq V$ where $W_1=W\cap(A\cup_C B), W_2=W\cap(C\cup D)$ and $W_C=W_1\cap W_2=C \cap W$. With these we have the following:

\textbf{Claim One:} $\tilde{H}_l \Delta'[W_1] =0$.

\text{Proof of Claim:} Consider the following Mayer-Vietoris long exact sequence:$$\tilde{H}_l \Delta'[W_1\cap A] \bigoplus \tilde{H}_l \Delta'[W_1\cap B] \longrightarrow \tilde{H}_l \Delta'[W_1] \longrightarrow \tilde{H}_{l-1} \Delta'[W_C]$$ If the first and the third terms of this sequence are zero then so will be the middle/second term by exactness. By taking suspension we note that $\tilde{H}_l (\Sigma_{a,b}( \Delta'[W_c]))=\tilde{H}_l (\Delta[\{a,b\}\cup W_C])$, which vanishes by the assumption on $\Delta$.  Thus we also have  $\tilde{H}_{l-1} \Delta'[W_C]=0$. Hence the third term is zero. Also the first term is zero, by assumption, because $\Delta'[W_1\cap A]= \Delta[W_1\cap A]$ and $\Delta'[W_1\cap B]= \Delta [W_1\cap B]$ by definition.
Done.

\textbf{Claim Two:} $\tilde{H}_l \Delta'[W_2] =0$.

\text{Proof of Claim:} We prove  by induction on $|D \cap W_2|$. If the size of this intersection is zero then $W_2 \subseteq C$ and the claim follows by assumption on $\Delta$ as $\Delta'[C]=\Delta[C]$.

Let $D'$ be defined by $W_2 \cap D =D' \cup \{d\}$ and denote $X=\Delta'[W_2]$. We prove the claim using the Mayer-Vietoris long exact sequence
corresponding to the union
$X=\text{ast}_d X\cup_{\text{link}_d X} \bar{\text{St}}_d X$:


$$\tilde{H}_l \text{ast}_{d} X \bigoplus \tilde{H}_l \overline{\text{St}}_{d} X \longrightarrow \tilde{H}_l X \longrightarrow \tilde{H}_{l-1} \text{link}_d X$$

Like before we prove that the end terms are zero.
As $\text{ast}_{d}X=\Delta'[W_2\setminus\{d\}]$,
by induction $\tilde{H}_l \text{ast}_{d}X$ is zero. Clearly $\tilde{H}_l \overline{\text{St}}_{d} X$ is zero as the space is a cone. Thus the first term is zero. For the last term we note that by definition $\text{link}_d X= (\text{link}_d \Delta) \cap \Delta[W_2 \cap C]$. Now since $\Delta$ is clique complex of a graph  we have  $(\text{link}_d \Delta) \cap \Delta[W_2 \cap C]=\Delta[\Vr(\text{link}_d \Delta) \cap W_2 \cap C]$. Let us define $\tilde{W}:=\Vr(\text{link}_d \Delta) \cap W_2 \cap C$. 



 By assumption $\tilde{H}_l \Delta [\{a,b\} \cup \tilde{W}]=0$. Hence $\tilde{H}_{l-1} \Delta [ \tilde{W}]=0$.

 Recall that $\Delta'[W]=\Delta'[W_1] \cup_{\Delta'[C]} \Delta'[W_2]$. Consider the following Mayer-Vietoris exact sequence: $$\tilde{H}_l \Delta'[W_1] \bigoplus \tilde{H}_l \Delta'[W_2] \longrightarrow \tilde{H}_l \Delta'[W] \longrightarrow \tilde{H}_{l-1} (\Delta'[W_C])$$

  By previous two claims the left term is zero. As $\tilde{H}_l \Delta'[W]\neq 0$ we have $\tilde{H}_{l-1} (\Delta'[W_C])\neq 0$. But as $\Delta'[W_C]=\Delta[W_C]$ we have
  $0 \neq \tilde{H}_l(\Delta[W_C] * \{a,b\})= \tilde{H}_l\Delta[\{a,b\} \cup W_C]$,  a contradiction to the assumption.
  Thus $\reg (\Delta) \geq l+2$.
\end{proof}

We are now ready to prove Theorem~\ref{thm:main}.


\begin{proof}[Proof of Theorem~\ref{thm:main}]
(i) We first prove that $\reg(I(G)^2:ab) \leq \reg(I(G))$ for every edge $ab\in E(G)$. Let $J=(I(G)) +(uv|u\neq v, u \in N(a),v\in N(b))$. By Corollary~\ref{cor:I^2:e}
we have:$$(I(G)^2:ab)=J+(u^2| u \in N(a) \cap N(b)).$$
By Theorem~2.1 $\reg(I(G)^2:ab) ) = \reg(\widetilde{(I(G)^2:ab)})$, the polarization. Here $L:=\widetilde{(I(G)^2:ab)}=
J+(uu'| u \in N(a) \cap N(b))$
for new whisker variables $u'$
 in a larger polynomial ring (defined in Sec.2).
So enough to prove that $\reg(L) \leq \reg(I)$.

Now let $N(a) \cap N(b)=\{u_1, \ldots, u_k\}$. Consider the following short exact sequences:
$$0 \longrightarrow \frac{R}{(L:u_1)} (-1) \rightarrow \frac{R}{L} \rightarrow \frac{R}{(L,u_1)} \rightarrow 0$$
$$0 \longrightarrow \frac{R}{((L, u_1):u_2)} (-1) \rightarrow \frac{R}{(L, u_1))} \rightarrow \frac{R}{(L,u_1,u_2)} \rightarrow 0$$
$$\vdots$$
$$0 \longrightarrow \frac{R}{((L,u_1,\ldots u_{n-1}):u_n)} (-1) \rightarrow \frac{R}{(L, u_1, \ldots , u_{n-1})} \rightarrow \frac{R}{(L,u_1,\ldots ,u_n)} \rightarrow 0$$
  Now observe that $(L,u_1,\ldots ,u_n)=J+(\text{variables})$ and for every $i$,  $(L,u_1,\ldots u_{i-1}):(u_i)=(L:u_i)+ (\text{variables})$. By repeated use of Theorem 2.2 (both parts) we have that $\reg (L) \leq \max \{ \reg(L:u)+1, u\in N(a) \cap N(b), \reg (J)\}$. Now by Theorem~\ref{thm:reg(D')} $\reg(J) \leq \reg(I)$. Enough to show that $ \reg(L:u)+1 \leq \reg(I)$ for all $u \in N(a) \cap N(b)$.

  Now we have $(L:u)= (J:u)+(u')$, so $\reg(L:u)=\reg(J:u)$.
  Let $J':=I(G[V\setminus N(u)])$, then $(J:u)=J'+(\text{variables})$ so $\reg(J:u)\le \reg(J')$ as $J'$ is a square-free monomial ideal. Note that
  $J'+(ab)=I(G[V\setminus (N(u)-\{a,b\})]$, thus $\reg(J'+(ab))\le \reg I$.
  As the ideals $J'$ and $(ab)$ are defined over disjoint sets of variables, the regularity of their sum is the sum of regularities minus $1$ (see e.g.~\cite[Lem.8]{W}, namely
  $\reg(J'+(ab))=\reg J'+\reg (ab)-1=\reg(J')+1$.
  Putting together, we get
  $\reg(L:u)\le \reg(J') \leq \reg(I) -1$. This finishes the proof of part (i).

(ii) In part (i) we have already proved that $\reg(I(G)^2:e)\leq \reg(I(G))$. Hence by Theorem 2.5
we have for any generator $e_1\ldots e_{s-1}$ of $I(G)^{s-1}$, where $e_1,\ldots,e_{s-1}$ are (not necessarily) different edges, the following:
$$ \reg(I(G)^s:e_1\ldots e_{s-1})=\reg((I(G)^{s-1}:e_2\ldots e_{s-1})^2:e_1) \leq \reg(I(G)^{s-1}: e_2\ldots e_{s-1}).$$
As this is true for any $s$, proceeding inductively we get $\reg(I(G)^s:e_1\ldots e_{s-1}) \leq \reg(I(G))$. Hence by Theorem 2.3 and induction we get the result.
\end{proof}

\section{Further Research}
 In this section we discuss some questions for further research. Our main result immediately leads to the following question whose answer has been conjectured to be positive by various people (see~\cite[Conj.A]{ABH} and~\cite[Conj.1.1]{JS}):

\begin{question}
For any finite simple graph $G$, is it true that $\reg I(G)^s \leq 2s+ \reg(I(G)) -2$ for all $s$?
\end{question}

  Due to the asymptotic stability we have that for an homogeneous ideal $I$ generated in degree $d$ we have an integer $c(I)$ such that $\reg(I^{s+1})-\reg(I^s)=d$ for all $s\geq c(I)$. We have proved that for all bipartite graphs $G$ we have $\reg(I(G)^s)- \reg(I(G)) \leq 2s-2$. However the behaviour of the sequence $\{\reg(I^s)\}$ can be irregular for smaller $s$ values even for edge ideals. In fact there are examples of bipartite graphs where $\reg(I(G)^2)=\reg(I(G)) +1$ (for example one can check that this is the case for the bipartite edge ideal $(x_1y_1,x_2y_2,x_3y_3,x_4y_4,x_1y_2,x_2y_4,x_3y_1,x_4y_3)$).
  
  Can $c(I)$ be bounded by some simple invariants of $I$, for homogenous ideals?
  Conca~\cite{C} showed that for any given integer $d > 1$ there exists an ideal $J$ generated by $d+5$ monomials of degree $d + 1$ in $4$ variables such that $\reg(J^k) = k(d + 1)$ for every $k <d$ and $\reg(J^d) \geq d(d + 1) + d-1$. In particular, $c(I)$ cannot be bounded above in terms of the number of variables only, not even for monomial ideals in general. Further, a result of Raicu~\cite{Raicu} gives binomial ideals $I_n$ on $n^2$ variables, generated in degree $2$, with $c(I_n)=n-1$. Thus, the following question arise:

\begin{question}
For homogeneous ideals $I$ on $n$ variables, generated in degree $d$, is $c(I)$ bounded above by a function of $d$ and $n$?
\end{question}

 It has been conjectured by Banerjee and Mukundan~\cite{AV}
 that for all bipartite graphs $G$, we have $c(I(G)) \leq 2$. It is known  for cochordal, gap free plus cricket/diamond/$4$-cycle free~\cite{AB,HHZh,Er1,Er2}. Apart from edge ideals, it was shown by Conca and Herzog~\cite{CH} that polymatroidal ideals have linear resolutions and powers of polymatroidal ideals are polymatroidal ideals. So for the class of all polymatroidal ideals $c(I)=1$.

Finally we conclude by a discussion on a related conjecture by~\cite{NP}:

\begin{conj}([23])\label{conj:Nevo-Peeva}
If $\reg(I(G)) \leq 3$ and $G^c$ has no induced $4$-cycle then for all $s \geq 2$ $\reg (I(G))^s =2s$.
\end{conj}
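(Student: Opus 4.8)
The plan is to combine the main theorem of this paper with the known lower bound for regularity of powers of edge ideals, squeezing $\reg(I(G)^s)$ between $2s$ from below and $2s$ from above. The hypotheses are $\reg(I(G))\le 3$ together with $G^c$ having no induced $4$-cycle, which by the characterization given earlier means $G$ is gap-free; equivalently $G$ has no induced $2K_2$.

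First I would establish the lower bound. For any graph $G$ with at least one edge and any $s\ge 1$, one has $\reg(I(G)^s)\ge 2s$, since $I(G)^s$ is generated in degree $2s$ and a minimal generator contributes a nonzero Betti number $\beta_{0,2s}$, giving $\reg\ge 2s-0=2s$. This is elementary and holds without any hypothesis on $G$. So the whole content of the conjecture is the matching upper bound $\reg(I(G)^s)\le 2s$.

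For the upper bound I would first handle the case $\reg(I(G))=2$: here $G^c$ is chordal by Fr\"oberg's theorem (Theorem~2.6(i)), and then Herzog--Hibi--Zheng (Theorem~2.6(ii)) gives $\reg(I(G)^s)=2s$ directly, so the conjecture holds. The remaining and genuine case is $\reg(I(G))=3$. I would like to apply Theorem~\ref{thm:main}, which gives $\reg(I(G)^s)\le 2s+\reg(I(G))-2=2s+1$ once $s\ge 2$; but this is off by one from the target $2s$, and moreover Theorem~\ref{thm:main}(ii) assumes $G$ bipartite, which is not hypothesized here. So the strategy must be to sharpen the inductive argument behind Theorem~\ref{thm:main} using gap-freeness. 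The key inductive input (Theorems~2.3 and~2.5 and part (i) of the main proof) is the colon bound $\reg(I(G)^2:e)\le\reg(I(G))$; to get $\reg(I(G)^s)\le 2s$ I would instead want the strengthened colon estimate $\reg(I(G)^2:e)\le 2$, i.e. that the colon ideal $(I(G)^2:e)=J+(u^2:u\in N(a)\cap N(b))$, after polarization, has regularity $2$. By Corollary~\ref{cor:I^2:e} and gap-freeness one should show that the underlying graph of the polarized colon ideal $L$ is again co-chordal (its complement is chordal): gap-freeness of $G$ forces $N(a)$ and $N(b)$ to interact heavily, so that adding all edges between $N(a)$ and $N(b)$ (and the whiskers on $N(a)\cap N(b)$) produces a co-chordal graph. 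Then Fr\"oberg gives $\reg(L)=2$, and feeding $\reg(I(G)^2:e)\le 2$ into the recursion of Theorem~2.3 (or Theorem~2.5 in a suitably adapted, not-necessarily-bipartite form) would propagate to $\reg(I(G)^s)\le 2s$ for all $s$.

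The main obstacle is precisely this co-chordality claim for the colon graph and the propagation step. Two difficulties stand out. First, one must prove that gap-freeness of $G$ is exactly what makes the complement of the polarized colon graph chordal; this is a purely combinatorial statement about which fill-in edges between $N(a)$ and $N(b)$ destroy induced $4$-cycles in the complement, and it is not obviously preserved, so a careful analysis of induced cycles in $(L$-graph$)^c$ using the hypothesis $\reg(I(G))=3$ (which controls the higher homology appearing in Hochster's formula) is needed. Second, Theorem~2.5 is stated only for bipartite $G$, so for the general gap-free case I would instead iterate Theorem~2.3 directly, checking that the gap-free and $\reg\le 3$ conditions are inherited (or that the relevant colon ideals continue to satisfy $\reg\le 2$) at each step of raising the power; verifying this inheritance is where I expect the real work to lie, and it is plausibly where the proof could break down if gap-freeness is not stable under the colon operation.
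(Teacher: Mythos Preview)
This statement is a \emph{conjecture}, not a theorem: the paper does not prove it, and in fact the discussion immediately following Conjecture~\ref{conj:Nevo-Peeva} is devoted to explaining why the very approach you outline \emph{cannot} work.

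Your plan hinges on the sharpened colon estimate $\reg(I(G)^2:ab)\le 2$ for every edge $ab$ of a gap-free graph $G$ with $\reg(I(G))=3$. The paper constructs an explicit obstruction: take a flag-no-square triangulation $\Delta=\cl(H)$ of the dunce hat and set $G=H^c$. Then $G^c=H$ has no induced $4$-cycle, $\reg(I(G))=3$ (since the dunce hat is contractible, so all induced subcomplexes have vanishing $\tilde{H}_l$ for $l\ge 2$, while vertex links carry nonzero $\tilde{H}_1$), and for \emph{every} edge $ab\in G$ the complex $\Delta'$ associated to $(I(G)^2:ab)$ still contains $\text{link}_a\Delta$ as an induced subcomplex with $\tilde{H}_1\neq 0$. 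Hence $\reg(I(G)^2:ab)=3$, not $2$, for every edge. In particular the polarized colon graph is \emph{not} co-chordal, so your ``co-chordality claim'' is false in general, and the recursion of Theorem~2.3 only yields $\reg(I(G)^2)\le\max\{3+2,\reg I(G)\}=5$, which is useless.

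Your own caveat---``it is plausibly where the proof could break down if gap-freeness is not stable under the colon operation''---is thus realized, but in a stronger form: already at the first colon step the regularity-$2$ bound fails outright. The conjecture remains open, and as the paper notes, even the value of $\reg(I(G)^2)$ for this dunce-hat example has not been computed.
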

 Theorem 2.3 was proved by Banerjee in his thesis to study this conjecture and related other problems, based on simple Theorem~2.2.
 We now explain why this inductive approach via colon ideals can \emph{not} be used directly to settle Conjecture~\ref{conj:Nevo-Peeva}.

Any $2$-dimensional simplicial complex $\Delta$ can be subdivided so that the resulted complex is flag-no-square, see~\cite{Dr} or~\cite[Lem.2.3]{PS}, i.e. $\Delta=\text{cl}(H)$ where $H$ is a graph with no \emph{induced} $4$-cycles. In particular,
we choose such $H$ so that $\Delta$ triangulates the dunce hat, a contractible $2$-dimensional complex.
Thus, all subcomplexes of $D$ have vanishing homology in dimension $\ge 2$. Further, the link of any vertex $a\in \Delta$ is an induced subcomplex (as $\Delta$ is a clique complex) with nonzero first homology. For an edge $ab\in G:=H^c$, the construction of $\Delta'$ from  Theorem~\ref{thm:reg(D')} satisfies $\text{link}_a \Delta'=\text{link}_a \Delta$ is an induced subcomplex of $\Delta'$.

We conclude that $\reg(I(G))=3$ and by Corollary~\ref{cor:I^2:e}(ii)
for any edge $ab\in G$ also $\reg((I(G)^2:ab)=3$.
Thus, if $\reg(I(G)^2)=4$ as Conjecture~\ref{conj:Nevo-Peeva} suggests, then Theorem 2.2 can not be directly applied to prove it.

On the other hand, if $\reg(I(G)^2)>4$ then this will be a counter example. Unfortunately we could not verify the value of $\reg(I(G)^2)$ due to computational limitations. It will be great if this can be verified in future.

\noindent \textbf{Acknowledgements} 
We thank Aldo Conca for pointing us to~\cite{Raicu}. 

Most of this work was done when first author was visiting the Institute of Mathematics of Hebrew University and he would like to thank faculty and staff of Hebrew University for their hospitality. Also, first author was partially supported by DST INSPIRE (India) research grant (DST/INSPIRE/04/2017/000752) and he would like to acknowledge that.

Second author was partially supported by Israel Science Foundation grant ISF-1695/15, by grant 2528/16 of the ISF-NRF Singapore joint research program, and by ISF-BSF joint grant 2016288.




\begin{thebibliography}{DGHP}

\bibitem{AA} A. Alilooee, A. Banerjee, Powers of Regularity Three of Bipartite Edge Ideals,  J. Commut. Algebra {\bf 9} (2017), no. 4, 441--454.

\bibitem{AB} A. Banerjee, Regularity of Powers of Edge Ideals,  J. Algebraic Combin. {\bf 41} (2015), no. 2, 303–321

\bibitem{ABH} A. Banerjee, S. Bayerslan, H.T.Ha, Regularity of Powers of Edge Ideals: From Local Properties To Global Bounds, arxiv.org 1805.01434

\bibitem{AV} A. Banerjee, V.Mukundan, On The Powers of Unmixed Bipartite Edge Ideals, J.Alg and its Appln., To Appear.

\bibitem{C} A. Conca, Regularity jumps for powers of ideals. Commutative Algebra with a focus on Geometric and Homological Aspects. Lect notes Pure and Appl., 244, Chapman and Hall/CRC, Boca Raton, FL., 2006.

\bibitem{CH} A. Conca, J. Herzog, Castelnuovo-Mumford regularity of products of ideals, Collect. Math. {\bf 54} (2003), no. 2, 137--152

\bibitem{CHT} S.D. Cutkosky, J. Herzog, and N.V. Trung, Asymptotic behaviour of the Castelnuovo-Mumford regularity. Composito Mathematica {\bf 118} (1999), 243--261.

\bibitem{DHS} H. Dao, C.Huneke  J. Schweig, Bounds on the regularity and projective dimension of ideals associated to graphs, J. Algebraic Combin.  {\bf 38}  (2013), 37--55.

\bibitem{Dr} A.N. Dranishnikov , Boundaries of Coxeter groups and simplicial complexes with given links, J. Pure and Appl. Alg., {\bf 137}  (1999), 139--151.


\bibitem{Er1} N. Erey, Powers of ideals associated to $(C_4, 2K_2)$-free graphs, J. Pure Appl. Algebra {\bf }223 (2019), no. 7, 3071-3080.

\bibitem{Er2} N. Erey, Powers of edge ideals with linear resolutions, Comm. Algebra 46 (2018), no.9, 4007-4020.


\bibitem{Fro}R. Fr\"oberg, On Stanley-Reisner rings, Topics in Algebra, Banach Center Publications, {\bf 26} (2) (1990), 57--70.


\bibitem{HVT} H. Tai Ha and A. Van Tuyl, Resolution of square-free monomial ideals via facet ideals: a survey,
Contemporary Mathematics  {\bf 448} (2007), 91--117.

\bibitem{HHZ} J.Herzog, T.Hibi: Monomial ideals. GTM 260, Springer--Verlag, 2011.

\bibitem{HHZh} J.Herzog, T.Hibi, X.Zheng: Monomial ideals whose powers have a linear resolution, Math. scand. {\bf 95} (2004), 23-32.


\bibitem{JNS} A.V. Jayanthan, N. Narayanan and S. Selvaraja, Regularity of powers of bipartite graphs. J. Algebraic Combin. {\bf 47} (2018), no.1, 17-38

\bibitem{JS} A.V.Jayanthan, Selvaraja. S: Upper bounds for the regularity of powers of edge ideals of graphs, arXiv 1805.01412


\bibitem{Ko} V. Kodiyalam, Asymptotic behaviour of Castelnuovo-Mumford regularity.  Proceedings of the American Mathematical Society (1999), {\bf 128 } (2), 407--411.


\bibitem{K1} M. Kummini, Homological invariants of monomial and binomial ideals, thesis, University of Kansas (2008).

\bibitem{MS} E. Miller and B. Sturmfels, Combinatorial commutative algebra,
Graduate Texts in Mathematics, Springer-Verlag, New York, {\bf 227} (2005).

\bibitem{MV} S. Morey and R. Villarreal,  Edge ideals: algebraic and combinatorial properties, Progress in Commutative Algebra: Ring Theory, Homology and Decomposition, de Gruyter, Berlin (2012), 85--126.

\bibitem{N} E. Nevo, Regularity of edge ideals of $C_4$-free graphs via the topology of the lcm-lattice,
J. Combin. Theory Ser. A {\bf 118} (2011), 491--501.

\bibitem{NP} E. Nevo and I. Peeva, $C_4$-free edge
ideals J Algebraic Combin. {\bf 37} (2013), 243--248.

\bibitem{PS} P. Przytycki and J. Swiatkowski, Flag-no-square triangulations and Gromov boundaries in dimension 3, Groups Geom. Dyn. {\bf 3} (2009), 453--468.

\bibitem{Raicu} C. Raicu, Regularity and cohomology of determinantal thickenings, arXiv:1611.00415v2 (2017).

\bibitem{W} R. Woodrofe, J. Commut. Algebra {\bf 6}, no. 2 (2014), 287--304

\end{thebibliography}
\end{document}